\definecolor{dgreen}{rgb}{0, 0.5, 0}
\definecolor{orange}{rgb}{1, 0.37, 0.12}
\newif
\definecolor{refkey}{rgb}{0,.6,0}
\definecolor{labelkey}{cmyk}{0, 1, 0, 0}
\def\R{\mathbb{R}}
\def\eps{\varepsilon}
\DeclareMathOperator*{\argmin}{arg\,min}
\def\state{x} 
\def\costate{p} 
\def\control{\alpha} 
\def\controlbis{\beta} 
\def\time{t}
\def\statevar{\xi} 
\def\costatevar{\eta} 
\def\controlvar{a} 
\def\timevar{s}
\def\inputvar{\eta}
\def\f{f}
\def\inp{u}
\def\target{z}
\def\inpdim{d}
\def\BV{\mathit{BV}}
\def\statenet{\gamma}
\def\cost{C}
\def\loss{\ell}
\def\controlspace{\mathcal{A}}
\def\valuef{v}
\def\costate{p}
\def\costatevar{\rho}
\def\graph{G}
\def\vertices{V}
\def\arcs{E}
\def\one{\mathop{1}\nolimits}
\def\costatenet{\mu}
\def\mudim{M}
\def\thetavar{\vartheta}
\def\neuron{y}
\def\neurondim{m}
\def\rev#1{#1}
\def\ch{\mathop{\rm ch}\nolimits}
\def\pa{\mathop{\rm pa}\nolimits}
\def\statedim{n} 
\def\controldim{N} 
\def\outdim{p} 
\theoremstyle{plain}
\newtheorem{theorem}{Theorem}
\theoremstyle{definition}
\newtheorem{definition}{Definition}
\newtheorem{example}{Example}
\theoremstyle{remark}
\newtheorem{remark}{Remark}
\title{Neural Time-Reversed Generalized Riccati Equation}
\author{
    Alessandro Betti\textsuperscript{\rm 1},
    Michele Casoni\textsuperscript{\rm 2},
    Marco Gori\textsuperscript{\rm 1,2},
    Simone Marullo\textsuperscript{\rm 2,3},\\
    Stefano Melacci\textsuperscript{\rm 2},
    Matteo Tiezzi\textsuperscript{\rm 2}
}
\begin{document}

\maketitle

\begin{abstract}\rev{
Optimal control deals with optimization problems in which variables steer a
dynamical system, and its outcome contributes to the objective function. Two
classical approaches to solving these problems are \emph{Dynamic Programming}
and the \emph{Pontryagin Maximum Principle}. In both approaches, Hamiltonian
equations offer an interpretation of optimality through auxiliary variables
known as \emph{costates}. However, Hamiltonian equations are rarely used due
to their reliance on forward-backward algorithms across the entire temporal
domain.  This paper introduces a novel neural-based approach to optimal
control, with the aim of working forward-in-time. Neural networks are employed not only for implementing state
dynamics but also for estimating costate variables. The parameters of the
latter network are determined at each time step using a newly introduced
\emph{local} policy referred to as the \emph{time-reversed generalized
Riccati equation}. This policy is inspired by a result discussed in the
Linear Quadratic (LQ) problem, which we conjecture stabilizes state
dynamics. We support this conjecture by discussing experimental results from
a range of optimal control case studies.
}
\end{abstract}

\section{Introduction}
\label{sec:intro}
Optimal control \cite{lewis2012optimal} offers a wide framework to set up optimization problems that are
concerned with the steering of a dynamical system in some parsimonious
way. It is therefore clear that its scope is quite large and it
intersects many areas such as, for instance, pure math, natural sciences and engineering.
Being the optimization problem objective defined on the solution of a system
of ODEs over a certain temporal horizon $[\time_0,T]$, it has a
global-in-time nature.  Indeed, classical approaches to optimal control such
as the \emph{Pontryagin Maximum Principle}
(see~\cite{gamkrelidze1964mathematical,giaquinta2013calculus})
and \emph{dynamic programming} (see~\cite{bardi1997optimal})
both characterize solutions
in terms of a boundary problem for some differential conditions (usually a
PDE in dynamic programming and a system of ODEs with the Pontryagin maximum
principle). This means that, in general, the algorithms to find solutions
require iterative forward/backward approaches to glue the local-in-time
computations of the differential equations with the boundary conditions at
opposite sides of the temporal interval.

In many instances of control problems, where either the complexity of the
model is high and/or the temporal horizon could be very long, as it could
happen for instance in Reinforcement Learning~\cite{sutton2018reinforcement,
bertsekas2019reinforcement} or Lifelong Learning~\cite{betti2022continual,
MAI202228}, these methods are unfeasible and we usually need to resort to
different control strategies.  A typical approach is that of using
\emph{Model Predictive Control}~\cite{garcia1989model} (also known as
\emph{receding horizon control}), with a real-time iteration (RTI) scheme for
solving the online optimization problem~\cite{doi:10.1137/S0363012902400713}.
The necessity of finding optimization procedures that only exploit
forward (in time) computations is an especially sensible matter within
the machine learning community, where the possibility of performing a
backpropagation through the entire temporal horizon (backpropagation through
time) is considered to be extremely implausible from a biological point
of view~\cite{hinton2022forward} and in some cases prohibitively costly.

In this work we present a novel approach that makes use of Hamilton
Equations giving an estimate of the costate function through a
neural-network computation, working \textit{forward-in-time}.  The basic idea of our approach is to estimate
the parameters of this network by means on an indirect usage of the Hamilton equations. \rev{Recently, in \cite{Jin2019PontryaginDP}, the possibility of exploiting Hamiltonian equations for learning system dynamics and controlling policies forward in time has been investigated. In this paper, the authors introduced {\it Pontryagin Differentiable Programming} (PDP) to efficiently compute the gradients of the state trajectory with respect to the system parameters using an auxiliary control system.  This approach differs from the method proposed in this paper by the fact that, 
instead, in the present work, we use  Hamilton equations
indirectly for defining an optimization problem for the temporal variations of the model parameters.}
In doing so, we are basically defining a dynamic on the
parameters that estimate the costate in a similar manner as we would do with
the Riccati equation in the Linear Quadratic control problem.  We conjecture
that the resulting time-reversed dynamics will lead to a stabilizing effect
on the state equation, hence opening the possibility to use this method
forward-in-time. 

This approach has been inspired by the possibility of using optimal
control techniques in the continual online learning scenario recently
proposed in~\cite{betti2022continual} to formulate a class of
lifelong problems using the formalism of control theory.
For this reason, throughout the paper we assume that the dynamical system
that defines the evolution of the state is also expressed by a neural
model, in the form of a continuous time recurrent neural
network~\cite{6814892}.
The authors in~\cite{betti2022continual}
proposed a method to enforce stability by pushing the costate dynamic to
converge to zero, and hence directly interfering with the dynamics prescribed
by Hamilton equations. Conversely, we directly leverage on
Hamilton equations to devise a stabilizing policy for the system.

The paper is organized as follows. In Section~\ref{sec:cont} we describe the class of
dynamical models that we take into account, Section~\ref{sec:ocp} is devoted to the
formulation of the control problem and contains a short review of the main
results from optimal control that will be used in the reminder of the
paper. Section~\ref{sec:flip} constitutes the core part of the contribution and it is 
where we introduce the time-reversed generalized Riccati equation.
Section~\ref{sec:exp} contains the experimental observations that have been
organized in three different case studies, while conclusions and ideas for future work are the subjects of Section~\ref{sec:lim}.

\section{Continuous time state model}
\label{sec:cont}
Let us focus on models that depend on $\controldim$ parameters, whose values at time $\time$ are yielded by $\control(\time)$, and that are based on an internal state $\state(\time)$ of size $\statedim$ which dynamically changes over time. 
We consider a classic state model 
\begin{equation}\label{eq:state-model}
\state'(\time)= \f(\state(\time),\control(\time),\time),\quad t\in(\time_0,T]
\end{equation}
where $\f\colon\R^\statedim\times\R^\controldim\times[\time_0,T]\to\R^\statedim$
is a Lipschitz function, $t\mapsto\control(t)$ is the trajectory of the parameters 
of the model, which is assumed to be a \emph{measurable function}, and
$T$ is the temporal horizon on which the model is defined; $\time_0\ge0$.
We assume that the $\outdim$-components output of the model is computed by a \emph{fixed}
transformation of the state, $\pi\colon \R^\statedim\to\R^\outdim$, usually a
projection of class $C^\infty(\R^\statedim;\R^\outdim)$.
The initial state of the model is assigned to a fixed vector $\state^0\in
\R^\statedim$, that is
\begin{equation}\label{eq:state-initialization}
\state(\time_0)=\state^0.
\end{equation}
Let us now pose $\controlspace:=\{\control\colon[\time_0,T]\to\R^\controldim:
\control\hbox{ is measurable}\}$. 
\begin{definition}\label{def:solution}
Given a $\controlbis\in\controlspace$, and given an initial state $\state^0$,
we define the \emph{state trajectory}, that we indicate with
$t\mapsto x(t;\controlbis,\state^0,\time_0)$, the solution of~\eqref{eq:state-model} with
initial condition \eqref{eq:state-initialization}.
\label{def1}
\end{definition}


\rev{The goal of this work is to define a procedure to estimate
with a {\it forward-in-time} scheme an approximation of the optimal  control parameters $\control$.\footnote{The meaning of optimality
will be described in details in Section~\ref{sec:ocp}}  
Notice that the explicit time dependence $\time$ of Eq.~\eqref{eq:state-model} is necessary to take into account 
the provision over time of some input data to the model.
In the next section, we will give a more precise structure to such
temporal dependence.}

\subsection{Neural state model}\label{sec:state-model}
We implement the function $\f$ of Eq.~\eqref{eq:state-model} by a neural network $\statenet$, where the dependence on time $\time$ is indirectly modeled by a novel function $\inp(\time)$, that yields the $\inpdim$-dimensional input data provided at time $\time$ to the network. 
Formally, for all $\statevar\in\R^\statedim$, for all $\timevar\in[\time_0,T]$ and all $\controlvar\in\R^\controldim$,
$\f(\statevar,\controlvar,\timevar):=\statenet(\statevar,\inp(\timevar),\controlvar)$, where,
for all fixed $\controlvar\in\R^\controldim$, the map $\statenet(\cdot,\cdot,\controlvar)
\colon\R^\statedim\times\R^\inpdim\to\R^\statedim$ is a neural network and
 $u\colon[\time_0,T]\to\R^\inpdim$ is the input signal, being
$u\in\BV((\time_0,T))$ an assigned input map of bounded variation\footnote{
Here the space $\BV(\time_0,T)$ is the functional space of functions of bounded variation,
see~\cite{ambrosio2000functions}.}.
More directly we can assume that we are dealing with a
Continuous Time Recurrent Neural Network (CTRNN) (see~\cite{6814892}) that at each instant estimates the
variation of the state based on the current value of the state itself and on an
external input. The network $\statenet(\cdot,\cdot,\controlvar)$ represents the
transition function of the state. In this new notation, the dynamic of the
state, given by Eq.~\eqref{eq:state-model} together with Eq.~\eqref{eq:state-initialization},
is described by the following Cauchy problem
for $\state$:
\begin{equation}\label{eq:state-model-nn}
\begin{cases}
\state'(\time)=\statenet(\state(\time), \inp(\time),\control(\time)), \quad \hbox{for $\time\in(\time_0,T]$} \\
\state(\time_0)=x^0.
\end{cases}
\end{equation}
To help the reader in giving an initial interpretation to the  parameters $\alpha$, at this stage it is enough to assume that 
 $\control$ could basically represent the weights and the
biases of the network $\statenet$. Similarly, the state $\state$ could be imagined as the usual state in a CTRNN. However, there are still some steps to take before providing both $\control$ and $\state$ the exact role we have considered in this paper. 
First, we need to define the way $\control$ participates in an optimization problem, defining a {\it control problem} 
whose {\it control parameters} are $\control$. 
This will be the main topic of the next Section~\ref{sec:ocp}, where we will start from the generic state model of the beginning of Section~\ref{sec:cont}, and then cast the descriptions on the neural state model $\statenet$---Section~\ref{sec:ocp-net}.
When doing it, we will also reconsider the role of $\control$ in the context of the neural network $\statenet$, due to some requirements introduced by the optimization procedure over time.

\section{Control problem}\label{sec:ocp}
Suppose now that we want to use the model described in Eq.~\eqref{eq:state-model} paired with Eq.~\eqref{eq:state-initialization} to solve some task that can be expressed as a minimization problem
for a cost functional $\control\mapsto C(\alpha)$.
We recall the notation $\state(\time;\control,\state^0,\time_0)$, introduced in Def.~\eqref{def1},
to compactly indicate the state $\state$ and all its dependencies as a solution of
Eq.~\eqref{eq:state-model} with initial values~\eqref{eq:state-initialization}.
The cost functional has the following form:
\begin{equation}\label{eq:cost-functional}
\cost_{\state^0, \time_0}(\control):=
\int_{\time_0}^T \loss(\control(\time), \state(\time;\control,\state^0,\time_0), \time)\, d\time,
\end{equation}
where $\loss(\controlvar,\cdot,\timevar)$ is bounded and Lipshitz $\forall \controlvar\in\R^\controldim$ and
$\forall\timevar\in[\time_0,T]$.
The function $\loss$ is usually called \emph{Lagrangian} and it can be thought as the counterpart of a classic machine-learning loss function in control theory.
Because the term $\state(\time;\control,\state^0,\time_0)$ in Eq.~\eqref{eq:cost-functional}
depends on the variables $\control$ through the integration of a first-order
dynamical system, the problem
\begin{equation}\label{eq:minimization-of-cost}
\min_{\control\in\controlspace} \cost_{\state^0,\time_0}(\control)
\end{equation}
is  a constrained minimization problem which is usually denoted
as \emph{control problem}~\cite{bardi1997optimal}, assuming that a solution exists. 

A classical way to address problem \eqref{eq:minimization-of-cost} is through dynamic 
programming and the Hamilton-Jacobi-Bellman equation~\cite{bardi1997optimal},
that will be the key approach on which we will build the ideas of this paper, paired with some intuitions to yield a forward solution over time. 
We briefly summarize such classical approach in the following.
The first step to address our constrained minimization problem is to define the \emph{value function} or \emph{cost to go}, that is a 
map $\valuef\colon\R^\statedim\times[\time_0,T]\to\R$ defined as 
\[\valuef(\statevar,\timevar):= 
\inf_{\alpha\in\controlspace} C_{\statevar,\timevar}(\alpha),\quad \forall(\statevar,
\timevar)
\in\R^\statedim\times[\time_0,T].\]
The optimality condition of the cost $\cost$ then translates into an 
infinitesimal condition (PDE) for the value function $\valuef$ (see~\cite{bardi1997optimal});
such result can be more succinctly stated once we define the \emph{Hamiltonian} function 
$H\colon\R^\statedim\times\R^\statedim\times[\time_0,T]\to \R$
\begin{equation}\label{eq:hamiltonian}
H(\statevar,\costatevar,\timevar):=\min_{\controlvar\in\R^\controldim} 
\{\costatevar\cdot f(\statevar ,\controlvar,\timevar)+\ell(\controlvar,\statevar,\timevar)\},
\end{equation}
being $\cdot$ the dot product.
Then the following well-known result holds.
\begin{theorem}[Hamilton-Jacobi-Bellman] \label{th:HJB}
Let us assume that $D$ denotes the gradient operator with respect to $\statevar$.
Furthermore, let us assume that $\valuef\in
C^1(\R^\statedim\times[\time_0,T],\R)$ and that the minimum of $C_{\statevar,\timevar}$, Eq.~\eqref{eq:minimization-of-cost}, exists for
every $\statevar\in\R^\statedim$ and for every $\timevar\in[\time_0,T]$. Then
$\valuef$ solves the PDE
\begin{equation}\label{eq:HJB}
v_\timevar(\statevar,\timevar)+H(\statevar,Dv(\statevar,\timevar),\timevar)=0, 
\end{equation}
$(\statevar,\timevar)\in \R^\statedim\times[\time_0,T)$, with terminal condition $\valuef(\statevar,T)=0$, $\forall \statevar\in\R^\statedim$.
Equation~\eqref{eq:HJB} is usually referred to as Hamilton-Jacobi-Bellman equation.
\end{theorem}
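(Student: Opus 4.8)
\noindent The plan is to derive the PDE from the \emph{Dynamic Programming Principle} (DPP) and then read off the equation by probing the infimum with constant and with nearly optimal controls. The first step is to establish that for every $(\statevar,\timevar)\in\R^\statedim\times[\time_0,T)$ and every sufficiently small $h>0$,
\[
\valuef(\statevar,\timevar)=\inf_{\control\in\controlspace}\left\{\int_\timevar^{\timevar+h}\loss\bigl(\control(\time),\state(\time;\control,\statevar,\timevar),\time\bigr)\,d\time+\valuef\bigl(\state(\timevar+h;\control,\statevar,\timevar),\timevar+h\bigr)\right\}.
\]
The ``$\le$'' inequality follows from the additivity of the cost integral along a trajectory: any admissible control on $[\timevar,T]$ splits at $\timevar+h$, and the cost of its tail on $[\timevar+h,T]$ is bounded below by $\valuef(\state(\timevar+h),\timevar+h)$. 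The ``$\ge$'' inequality follows by concatenating an arbitrary control on $[\timevar,\timevar+h]$ with an $\eps$-optimal control for the subproblem started at $(\state(\timevar+h),\timevar+h)$; the concatenation is still measurable, hence in $\controlspace$, and uniqueness of solutions of~\eqref{eq:state-model} (from the Lipschitz hypothesis on $\f$) guarantees the composed trajectory is the one obtained by restarting at $\state(\timevar+h)$. The terminal condition is immediate, since $\cost_{\statevar,T}(\control)=\int_T^T\loss\,d\time=0$ for every $\control$, so $\valuef(\statevar,T)=0$.

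Next I would prove $\valuef_\timevar+H\ge0$ by testing the DPP with constant controls. Fixing $\controlvar\in\R^\controldim$ and inserting $\control\equiv\controlvar$ on $[\timevar,\timevar+h]$ gives $\valuef(\statevar,\timevar)\le\int_\timevar^{\timevar+h}\loss(\controlvar,\state(\time),\time)\,d\time+\valuef(\state(\timevar+h),\timevar+h)$. Rearranging, dividing by $h$, and letting $h\to0^+$, using $\valuef\in C^1$ so that $\tfrac{d}{d\time}\valuef(\state(\time),\time)=\valuef_\timevar(\state(\time),\time)+D\valuef(\state(\time),\time)\cdot\f(\state(\time),\controlvar,\time)$ and $\state(\time)\to\statevar$ as $\time\to\timevar^+$, yields $0\le\loss(\controlvar,\statevar,\timevar)+\valuef_\timevar(\statevar,\timevar)+D\valuef(\statevar,\timevar)\cdot\f(\statevar,\controlvar,\timevar)$. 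Minimising the right-hand side over $\controlvar$ and invoking definition~\eqref{eq:hamiltonian} gives $\valuef_\timevar(\statevar,\timevar)+H(\statevar,D\valuef(\statevar,\timevar),\timevar)\ge0$.

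For the reverse inequality $\valuef_\timevar+H\le0$, for each $\eps>0$ and small $h$ I would pick an $\eps h$-optimal control $\control_\eps\in\controlspace$ for the DPP infimum, with trajectory $\state_\eps(\time):=\state(\time;\control_\eps,\statevar,\timevar)$, so that $\valuef(\statevar,\timevar)+\eps h\ge\int_\timevar^{\timevar+h}\loss(\control_\eps(\time),\state_\eps(\time),\time)\,d\time+\valuef(\state_\eps(\timevar+h),\timevar+h)$. Expressing the increment of $\valuef$ along $\state_\eps$ via the fundamental theorem of calculus, $\valuef(\state_\eps(\timevar+h),\timevar+h)-\valuef(\statevar,\timevar)=\int_\timevar^{\timevar+h}\bigl[\valuef_\timevar(\state_\eps(\time),\time)+D\valuef(\state_\eps(\time),\time)\cdot\f(\state_\eps(\time),\control_\eps(\time),\time)\bigr]d\time$, substituting, and using the pointwise bound $\loss(\controlvar,\statevar,\timevar)+\costatevar\cdot\f(\statevar,\controlvar,\timevar)\ge H(\statevar,\costatevar,\timevar)$ along the trajectory, I obtain $\eps h\ge\int_\timevar^{\timevar+h}\bigl[\valuef_\timevar(\state_\eps(\time),\time)+H(\state_\eps(\time),D\valuef(\state_\eps(\time),\time),\time)\bigr]d\time$. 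Dividing by $h$, letting $h\to0^+$, and then $\eps\to0$ gives $\valuef_\timevar(\statevar,\timevar)+H(\statevar,D\valuef(\statevar,\timevar),\timevar)\le0$, and combining with the previous step closes the proof.

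I expect the main obstacle to be the last limit: since $\controlspace$ contains unbounded controls, one cannot directly claim that $\time\mapsto\state_\eps(\time)$ is equicontinuous near $\timevar$, which is what makes the average $\tfrac1h\int_\timevar^{\timevar+h}[\,\cdot\,]\,d\time$ converge to the value of the integrand at $(\statevar,\timevar)$. The fix is to exploit that $\loss$ is bounded, so the DPP infimum is unchanged when restricting to controls whose contribution keeps the trajectory within a fixed neighbourhood of $\statevar$; there a Gronwall estimate based on the Lipschitz bound for $\f$ gives $|\state_\eps(\time)-\statevar|\le\omega(h)$ with $\omega(h)\to0$ as $h\to0^+$, uniformly in the choice of near-optimal control, and then continuity of $\valuef_\timevar$, $D\valuef$ and of $H$ (the latter inherited from the continuity of $\f$ and $\loss$) lets the averages pass to the limit. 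The other piece of genuine work is making the DPP rigorous, in particular the measurable concatenation of controls and the attendant uniqueness of trajectories; everything else is routine.
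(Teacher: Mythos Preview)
Your proof is correct and, for the ``$\ge$'' inequality and the terminal condition, essentially identical to the paper's. The difference is in the reverse inequality.

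The paper does not use $\eps h$-optimal controls. It invokes the standing hypothesis that the minimum of $C_{\statevar,\timevar}$ is \emph{attained} for every $(\statevar,\timevar)$: take an optimal $\control^*\in\argmin_{\control\in\controlspace}C_{\statevar,\timevar}(\control)$, set $a^*:=\control^*(\timevar)$, and use optimality of $\control^*$ together with the DPP to get the \emph{equality}
\[
\valuef(\statevar,\timevar)=\int_\timevar^{\timevar+h}\loss(\control^*(\time),\state(\time;\control^*,\statevar,\timevar),\time)\,d\time+\valuef(\state(\timevar+h;\control^*,\statevar,\timevar),\timevar+h).
\]
Dividing by $h$ and letting $h\to0$ gives $\valuef_\timevar(\statevar,\timevar)+D\valuef(\statevar,\timevar)\cdot f(\statevar,a^*,\timevar)+\loss(a^*,\statevar,\timevar)=0$, so the infimum defining $H$ is achieved at $a^*$ with value~$0$. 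This sidesteps precisely the obstacle you flag: there is a single trajectory $\state(\cdot;\control^*,\statevar,\timevar)$, no family of near-optimal controls, and hence no need for equicontinuity or Gronwall estimates to pass to the limit.

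Your route, by contrast, does not use the existence hypothesis at all and would prove the theorem under the weaker assumption that only the infimum exists. That generality is what forces the extra work in the last paragraph. Both approaches have a small soft spot at the same place (the paper silently treats $\control^*(\timevar)$ and $\state'(\timevar)$ as well-defined for a merely measurable control, which strictly speaking holds only a.e.), but at the level of rigor intended here either argument is acceptable.
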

\begin{proof}
See appendix~A.
\end{proof}

The result stated in Theorem~\ref{th:HJB} gives a characterization of the value function;
the knowledge of the value function in turn gives a direct way to construct a
solution of the problem defined in Eq.~\eqref{eq:minimization-of-cost} by a standard
procedure called \emph{synthesis procedure}~\cite{evans2022partial,bardi1997optimal}, for which we summarize its main ingredients.
The first step is, once a solution of Eq.~\eqref{eq:HJB} with the terminal condition
$\valuef(\statevar,T)=0$, $\forall \statevar\in\R^\statedim$ is known,
to find an \emph{optimal feedback map}
$S\colon\R^\statedim\times[\time_0,T]\to\R^\controldim$ defined by the condition
\begin{equation}\label{eq:feedback}
S(\statevar,\timevar)\in\argmin_{\controlvar\in\R^\controldim}
\{D\valuef(\statevar,\timevar)\cdot
f(\statevar ,\controlvar,\timevar)+\ell(\controlvar,\statevar,\timevar)\}.
\end{equation}
Once a function $S$ with such property is computed, the second step is to solve
\[
\state'(\time)=f(\state(\time),S(\state(\time),\time),\time), \quad\hbox{for}\quad\time\in(\time_0,T), \]
with initial condition $\state(\time_0)=\state^0$, and call a solution of this equation $\state^*$. Then the optimal control $\control^*$ is
directly given by the feedback map:
\begin{equation}
    \alpha^*(\time)=S(\state^*(\time),\time).
\label{eq:star}
\end{equation}


\paragraph{Hamilton equations}
There exists another route that can be followed to face the problem of 
Eq.~\eqref{eq:minimization-of-cost}, and that does not directly make
use of Hamilton-Jacobi-Bellman equation~\eqref{eq:HJB}. Such a route, that we will exploit in the rest of the paper, mainly rely on an alternative
representation of the value function which is obtained through the
\emph{the method of characteristics}~\cite{courant2008methods}
and which basically makes it possible to compute the solution of Hamilton-Jacobi-Bellman
equation along a family of curves that satisfy a set of \emph{ordinary differential equations} (ODEs).
This approach is also equivalent (see~\cite{bardi1997optimal}) to the Pontryagin Maximum Principle~\cite{giaquinta2013calculus}.
Let us define the \emph{costate} $\costate(\time):=D\valuef(\state(\time),\time)$
and consider the following system of ODEs known as \emph{Hamilton Equations},
\begin{equation}\label{eq:H-J-equation}
\begin{cases}
\state'(\time)=H_\costatevar(\state(\time),\costate(\time),\time); &\time\in(\time_0,T]\\
\costate'(\time)=-H_\statevar(\state(\time),\costate(\time),\time); &\time\in(\time_0,T] \\
\state(\time_0)=\state^0;\\
\costate(T)=0,
\end{cases}
\end{equation}
being $H_{\costatevar}$ and $H_{\statevar}$ the derivatives of $H$ with respect to its second and first argument, respectively.
Given a solution to Eq.~\eqref{eq:H-J-equation}, we can find a solution of
Eq.~\eqref{eq:HJB} with the appropriate terminal conditions (see~\cite{bardi1997optimal}).
More importantly, this means that instead of directly finding the value function $v$,
in order to find the optimal control $\control^*$ we can solve Eq.~\eqref{eq:H-J-equation}
to find $\costate^*$ and $\state^*$ and then, as we describe in
Eq.~\eqref{eq:star} and~\eqref{eq:feedback}, choose
\begin{equation}\label{eq:syntesis-costate}
\control^*\in \argmin_{\controlvar\in\controlspace} \{\costate^*(\time)\cdot
f(\state^*(\time)
,\controlvar,\time)+\ell(\controlvar,\state^*(\time),\timevar)\}.
\end{equation}
While the problem reformulated in this way appears to be significantly more
tractable, having traded a PDE for a system of ODEs, the inherent difficulty
of solving a global-in-time optimization problem remains and can be understood
as soon as one realizes that Eq.~\eqref{eq:H-J-equation} is a problem with both initial
and terminal boundary conditions. From a numerical point of view, this means that in
general an iterative procedure over the whole temporal interval is needed (for instance
shooting methods~\cite{osborne1969shooting}), making this approach, based on
Hamilton equations, unfeasible for a large class of problems when the dimension of the
state and/or the length of the temporal interval is big. Finding a forward approach to deal with this issue will be the subject of Section~\ref{sec:flip}, while our next immediate goal is to bridge the just introduced notions to formalize the role of $\control$ in our neural-based approach.

\subsection{Controlling the parameters of the network}\label{sec:ocp-net}
Let us now bridge the just described notions with the neural-based implementation $\statenet$ of the state model, as we have already discussed in Section~\ref{sec:state-model}. In this section, we will give a detailed description of the state variable $\state(\time)$ associated to the neural computation
and we will discuss the specific instance of the controls $\control(t)$ we consider, that are both related to the parameters of net $\statenet$.

We consider a digraph $\graph=(\vertices,\arcs)$ and, without loss of generality,
let us assume that $\vertices=\{1,\dots,\neurondim\}$. Remember that given a
digraph, for each $i\in \vertices$ we can always define two sets $\ch(i):=\{
j\in \vertices : (i,j)\in \arcs\}$ and $\pa(i):=\{j\in \vertices :
(j,i)\in \arcs\}$. The digraph becomes a network as soon as we decorate 
each arc $(j,i)\in\arcs$ with a weight $w_{ij}(\time)$ and each vertex $i\in\vertices$
with a neuron output $\neuron_i(\time)$ and a bias $b_i(\time)$,
for every temporal instant $\time\in[\time_0,T]$. Then the typical
CTRNN computation can be written as
\begin{equation}
\begin{split}
\neuron_i'(t) =-\neuron_i(t) + \sigma\Bigl(& \sum_{j\in\pa(i)}w_{ij}(\time)\neuron_j(\time)
+b_i(\time) \\
&\qquad\qquad {}+\sum_{j=1}^\inpdim k_{ij}(\time)\inp_j(\time)\Bigr),
\end{split}
\label{eq:residual}
\end{equation}
where $k_{ij}(\time)$ is a component of the weight matrix associated with the input $\inp$ and 
$\sigma\colon\R\to\R$ is an activation function. 
In general, when dealing with optimization over time, we want to be able to impose a regularization not only on the
values of the parameters of the network, but also on their temporal variations;
as a matter of fact, in many applications one would consider preferable
slow variations or, if the optimization fully converged, even constant parameters of the network.
For this reason it is convenient to associate the control variables $\control$
with the temporal variations (derivatives) of the network's
parameters. The classic learnable parameters (weights and biases) of the network can be considered as 
part of the state $\state$, together with the neuron outputs
$\neuron$. This requires ($i.$) to extend the neural state model of Eq.~\eqref{eq:state-model-nn}, in order to provide a dynamic to the newly introduced state components, and ($ii.$) to take into account the novel definition of $\control$.
Formally, the state at time $\time$ becomes
$\state(\time) = (\neuron(\time), w(\time), b(\time), k(\time))$
and $\statenet$ is only responsible of computing the dynamic of the $\neuron$-portion of it.
\footnote{We have overloaded the symbol $\statenet$: in Eq.~\eqref{eq:state-model-nn}
was defined as the transition function of the whole state, here only of the $\neuron$ part.}
The state model of Eq.~\eqref{eq:state-model-nn}, involving all the components of $x$ above, is then, 
\begin{equation}\label{eq:full-state-model}
\begin{cases}
\neuron'(\time) = \statenet(\neuron(\time), \inp(\time),w(\time),b(\time),k(\time)) \\
w'_{ij}(\time)=\omega_{ij}(\time), \quad (j,i)\in\arcs\\
b'_i(\time)=\nu_i(\time), \quad  i\in\vertices\\
k'_{ij}(\time)=\chi_{ij}(\time), \quad i\in\vertices \hbox{ and }
j=1,\dots,\inpdim.
\end{cases}
\end{equation}
We can finally formalize the control variables\footnote{To avoid a cumbersome notation
we will denote with the name of a state variable or control variable
without specifying any index simply the list of those variables.}
$\control(\time)=(\omega(\time), \nu(\time), \chi(\time))$,
that, when paired with the previous system of equations, allows us to view such a system as a neural state model in the form $\state'(\time)=f(\state(\time),\control(t),t)$, coherently with Eq.~\eqref{eq:state-model}
and~\eqref{eq:state-model-nn}.
Due to the definition of $\control$, a quadratic penalization in $\control$ amounts to
a penalization on the ``velocities'' of the parameters of the network. 

\section{Time-Reversed generalized Riccati equation}
\label{sec:flip}
As we briefly discussed in Section~\ref{sec:ocp}, the approach to 
problem~\eqref{eq:minimization-of-cost} based on
Hamilton equations is not usually computationally feasible, mostly due
to the fact that it involves boundary conditions on both temporal extrema
$\time_0$ and $T$.  More dramatically, Hamilton equations are not
generally stable. Consider, for instance, the following example
of a widely known control problem:

\begin{example}[Linear Quadratic Problem]\label{ex:sLQ}
The Scalar Linear Quadratic problem
is obtained by choosing $f(\statevar,\controlvar,\timevar)
=A\statevar+B\controlvar$ and $\loss(\controlvar,\statevar,\timevar)=
Q\statevar^2/2+ R\controlvar^2/2$ with $Q$ and $R$ positive and $A\in\R$,
$B\in\R$. In this specific case, it turns out that the Hamiltonian can be computed in closed form:
$H(\statevar,\costatevar,\timevar)=Q\statevar^2/2-B^2\costatevar^2/(2R)+
A\statevar\costatevar$. Hence, the Hamilton equations of Eq.~\eqref{eq:H-J-equation} become
$\state'(\time)=-B^2\costate(\time)/R+A \state(\time)$ and $\costate'(\time)=
-Q\state(\time)-A\costate(\time)$. The solution of such system, for general
initial conditions, have positive exponential modes $\exp(\omega\time)$ with
$\omega=\sqrt{A^2+B^2Q/R}$, that obviously generates instabilities.
\end{example}
However, it turns out that the LQ problem of Example~\ref{ex:sLQ} can be
approached with a novel solution strategy, which yields stability and is the key element we propose and exploit in this paper to motivate our novel
approach to forward-only optimization in neural nets.
We can in fact assume that the costate is estimated by 
$\costate(\time)=\costatenet(\state(\time),\theta(\time))$, that is defined 
by $\mu(\statevar,\thetavar)=\thetavar\statevar$, $\forall(\statevar,\thetavar)\in\R^2$. \rev{In other words, in this example at each time instant
$t$ the costate $p(t)$ is a 
linear function of the state $\state(\time)$ with parameter
$\theta(\time)$. By the definition of the costate, this is equivalent to
assume that the value function $\valuef$ is a quadratic function
of the state.} We then
proceed as follows:
\begin{enumerate}
\item We randomly initialize $\theta(0)$ and set $\state(0)=\state^0$;

\item At a generic temporal instant $\time$, 
under the assumption that $\costate(\time)=\mu(\state(\time),
\theta(\time))$, we consider the condition $\costate'(\time)=
d\costatenet(\state(\time),\theta(\time))/dt$ with $\costate'$
computed with the LQ Hamilton equation \eqref{eq:H-J-equation}:
\[\begin{aligned}
&\costatenet_\statevar(\state(\time),\theta(t))\cdot\state'(\time)
+\costatenet_\thetavar(\state(\time),\theta(t))\cdot\theta'(\time)\\
=&-Q\state(\time) -A\theta(\time)\state(\time).\end{aligned}\]
Solving this for $\theta'(\time)$ we obtain the Riccati equation:
$\theta'(\time)= (B^2/R)\theta^2(\time)-2S\theta(\time)-Q$;

\item We change the sign of the temporal derivative in the Riccati equation
\begin{equation}\label{eq:time-reverse-riccati}
\theta'(\time)= -(B^2/R)\theta^2(\time)+2S\theta(\time)+Q,\end{equation}
and we use it with initial conditions to compute $\time\mapsto\theta(\time)$;

\item Finally, we compute the control parameter using Eq.~\eqref{eq:syntesis-costate},
where the optimal costate is replaced with its estimation given
with the network $\costatenet$.
\end{enumerate}

As it is known, Riccati equation must be solved
with terminal conditions; in our case, since we do not
have any terminal cost, the optimal solution would be recovered imposing the boundary
condition $\theta(T)=0$. Solving this equation with initial conditions,
however, does not have any interpretation in terms of the optimization problem
(differently from the forward solution of the costate in
Hamilton equations).
Instead, let us set for simplicity $\time_0=0$ and define
$\Phi\colon \time\in [0,T]\to s\in[0,T]$, with $s:=T-\time$, so that if we let
 $\hat\theta:= \theta\circ\Phi^{-1}$, we have
$\forall s\in[0,T]$ that $\hat \theta(s)= \theta(\Phi^{-1}(s))=
\theta(T-s)$. This time, $\hat \theta$ will satisfy exactly 
Eq.~\eqref{eq:time-reverse-riccati}.
The solution of this equation with \emph{initial condition}
$\hat\theta(0)=0$ can be found explicitly by standard techniques:
\[\hat \theta(s)=\frac{R}{B^2}\lambda_1\lambda_2\frac{
e^{\lambda_1s} -e^{\lambda_2s}}{
\lambda_2e^{\lambda_1s} -\lambda_1 e^{\lambda_2s}
},\]
with
\[\lambda_{1,2}=A\pm\sqrt{A^2+\frac{QB^2}{R}}.\]
This solution has the interesting property that as
$T\to\infty$ and $s\to\infty$ with $s<T$ we have that $\hat
\theta(s)\to \lambda_1 R/B^2$ which is the 
optimal solution
on the infinite temporal horizon.
\rev{The transformation $\Phi$ defined above acts on the temporal domain $[0, T]$ and implements a \emph{reversal of time}, 
that we can also denote as $t \to T - t$. Given a trajectory on $[0, T]$, applying a time reversal transformation to it (as we did for the parameter $\theta$) entails considering the trajectory in which the direction of time is reversed. The dynamics that we observe while moving forward with this new temporal variable are the same dynamics
that we would observe when starting from $T$ and moving backward in the original variable. This comment should also give an
intuitive justification of why we could trade 
final conditions with initial conditions when this 
transformation is applied.}

\subsection{Neural costate estimation}\label{sec:costate-est}
The main contribution of this work is the proposal of a novel method to find a
forward approximation of the \emph{costate} trajectory by making use of an
additional Feed-forward Neural Network (FNN) to predict its values.  We assume that the costate $\costate$
is estimated by a FNN $\costatenet(\cdot,\cdot, \thetavar)
\colon\R^\statedim\times\R^\inpdim\to\R^\statedim$ with parameters $\thetavar\in\R^\mudim$ and then
we generalize steps 1--4 that we employed in the LQ problem in the previous subsection as follows:
\footnote{Here we have assumed, mainly to avoid unnecessary long equations,
that the $\costatenet(\cdot,\thetavar)$ take as input only the
state; however, more generally its domain
could also be enriched with the input signal $\inp$. Indeed, in the experimental section
we will show some case-studies where this is the case.}

\begin{enumerate}
\item We randomly initialize the parameters of the network
$\costatenet$ to the values $\theta(0)$  
and select an initial state $\state^0$. This allows us to compute
$\costatenet(\state^0,\theta(0))$ and in turn $\state'(0)$, using Hamilton equations
with $\costatenet(\state^0,\theta(0))$ in place of $p(0)$.\footnote{Due to the fact
that the controls enters in the state equation linearly (see Eq.~\eqref{eq:full-state-model})
if the Lagrangian is quadratic in the controls, like in Eq.~\eqref{eq:lagrangian-fcn-cp},
then the Hamiltonian~\eqref{eq:hamiltonian} can be computed in closed form.}

\item At a generic temporal instant $\time$, we assume to know $\state(\time)$ and $\theta(\time)$,
we compute
$\state'(t)=H_\costatevar(\state(\time),\costatenet(\state(t),\theta(\time)),\time)$
and define the loss function (see Remark~\ref{rem:loss-thetadot})
\begin{equation}\label{eq:loss-thetadot}
\begin{split}
\Omega_\time(\phi):=\frac{1}{2}\Vert & \costatenet_\statevar(\state(\time),\theta(t))\cdot\state'(\time)
+\costatenet_\thetavar(\state(\time),\theta(t))\cdot\phi\\
&{}+H_\statevar(\state(\time),\costatenet(\state(\time),\theta(t)),\time)\Vert^2+
\frac{\eps}{2}\Vert\phi\Vert^2.
\end{split}
\end{equation}
We choose $\delta\theta(t)\in\argmin_{\phi\in\R^\mudim} \Omega_t(\phi)$ by performing a 
gradient descent method on $\Omega_\time$.
\item  We numerically integrate the equation
\begin{equation}\label{eq:time-reversal-theta}
\theta'(\time)= -\delta\theta(t)
\end{equation}
with an explicit Euler step, in order to update the values of $\theta$.
We denote this equation (see Remark~\ref{rem:gen-riccati})
the \emph{ time-reversed generalized Riccati equation}.
\item Finally, we compute the control parameter using Eq.~\eqref{eq:syntesis-costate}
where the optimal costate is replaced with its estimation given
with the network $\costatenet$.
\end{enumerate}
\medskip

\begin{remark}
Notice that the assumption that the costate is computed as a function of the
state is consistent with its definition in terms of the value function
$\costate(\time)=D\valuef(\state(\time),\time)$. The only real assumption
that we are making is that the explicit temporal dependence in
$D\valuef(\state(\time),\time)$ is captured by the dynamic of the
parameters $\theta(\time)$ of the network $\costatenet$.
\end{remark}

\begin{remark}\label{rem:loss-thetadot}
The loss function $\Omega_t$ defined in Eq.~\eqref{eq:loss-thetadot} is
designed to enforce the consistency between the following two different
estimates of the temporal variations of the costate: \emph{i. } the one that
comes from the explicit temporal differentiation of $d
\costatenet(\state(\time), \theta(\time))/dt=
\costatenet_\statevar(\state(\time),\theta(t))\cdot\state'(\time)
+\costatenet_\thetavar(\state(\time),\theta(t))\cdot\theta'(t)$ and
\emph{ii. } the estimate
$-H_\statevar(\state(\time),\costatenet(\state(\time),\theta(t)),\time)$
obtained from the Hamilton equations.
\end{remark}

\begin{remark}
Eq.~\eqref{eq:time-reversal-theta} prescribes a dynamics for the
parameters $\theta$ that can be interpreted as a time reversal
transformation $t\mapsto T-t$ on the dynamics of the parameters of the
network $\costatenet$ induced by Hamilton equations (see
Remark~\ref{rem:loss-thetadot}). Our conjecture is that this
prescription implements a policy that induces stability to the
Hamilton equations (see Section~\ref{sec:flip}).
\end{remark}

\begin{remark}\label{rem:gen-riccati}
Notice that in the LQ case described in the previous section,
Eq.~\eqref{eq:time-reversal-theta} indeed reduces to
Eq.~\eqref{eq:time-reverse-riccati}.
Indeed, if $\mu(\statevar,\thetavar)=\thetavar\statevar$ and
$\eps\equiv0$ we have that for LQ
$\argmin_{\phi\in\R^\mudim} \Omega_\time(\phi)=\{(B^2/R)\theta^2(\time)
-2S\theta(\time)-Q\}$,
hence $\delta\theta(\time)=(B^2/R)\theta^2(\time)-2S\theta(\time)-Q$. In this
case the equation $\theta'(\time)=+\delta\theta(\time)$ would be exactly
the Riccati equation with the correct sign.
\end{remark}

\section{Experiments}
\label{sec:exp}
In the previous sections, we have presented our proposal within the framework of a continuous time setting. In the subsequent segment of our study, which is dedicated to experimentation, we employ explicit Euler steps of magnitude $\tau$ to approximate the differential equations. The number of time steps will be denoted as $n_T$. Moreover, we assume that the gradient descent procedure mentioned in Sec.~\ref{sec:costate-est} is characterized by a number of iterations $n_{\mathrm{iter}}$ and a learning rate $\lambda$. In appendix~B we report a summarizing algorithm of all the procedure presented so far.
In order to provide a proof-of-concept of the ideas of this paper, we analyze the capability of our forward-optimization procedure of solving three different tasks with neural estimators: (a) tracking a reference signal, (b) predicting the sign of an input signal and (c) classifying different wave-shapes provided as input signal. 
The experiences of this section are based on a shared definition of the Lagrangian function $\ell$ of Eq.~\eqref{eq:cost-functional}, which consists of a penalty term on the tracking quality of the target signal, referred to as $\target$, and regularization terms both on the outputs of the neurons in network $\statenet$ and on the velocities of its parameters, i.e., the control. Formally,
\begin{equation}\label{eq:lagrangian-fcn-cp}
\loss(\controlvar, \statevar, \timevar) = \frac{1}{2} q(\pi(\statevar) - \target(\timevar))^{2} + \frac{1}{2} r_{1} \sum_{i=\outdim}^{\statedim} \statevar_{i}^{2} + \frac{1}{2} r_{2} \sum_{i=0}^{\controldim} \controlvar_{i}^{2},
\end{equation}
where $\target(\timevar)$ is the task-specific target signal at time $\timevar$ and $q, r_{1}, r_{2} \ge 0$ are customizable constant coefficients. We recall that $\pi$ is a fixed map that, in this case, we assume to simply select one of the neurons in the output of $\gamma$ (i.e., the first one). Basically, minimizing the Lagrangian implies forcing the output of a neuron, $\pi(\statevar(\timevar))$, to reproduce the target signal for every $\timevar \in [\time_0, T]$. The goal of our experiences is to find the optimal control $\control$ which minimizes the cost functional defined in Eq.~\eqref{eq:cost-functional}. 
In the following subsections we report the results obtained for each experiment, where the initial time step is set to $\time_0=0$, the outputs of the neurons of $\statenet$ at the $\time_0=0$ are initialized to $0$, and the parameters of both networks $\statenet$ and $\costatenet$ start from random values. \rev{All the experiments 
have been conducted using Python 3.9 with PyTorch 2.0.0 on 
a Windows 10 Pro OS with an Intel Core i7 CPU and 16GB of memory.}

\begin{figure}[t]
\centerline{\includegraphics{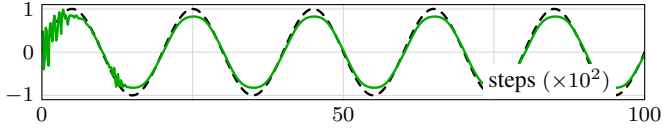}}
\caption{Tracking of a sinusoidal target signal using a recurrent network
$\gamma$ of 2 neurons. Black dashed line: target signal, continuous green
line: response of $\gamma$. The recurrent network has no input and the target
is a sine wave with frequency 0.001 Hz. \protect\label{fig:sine-tracking}%
}
\end{figure}

\paragraph{Case (a): tracking a target signal}
Let us consider the case where the target signal is given by
$\target(\timevar) = \sin(2\pi\varphi\timevar)$, where $\varphi=0.001$ Hz is the frequency of $\target$, and we want the recurrent network $\gamma$ to track it. Let us choose the model of the network $\gamma$ 
as composed of 
2 recurrent neurons fully connected to their inputs, $\neuron_0$ and $\neuron_1$, with $\tanh$ activation function, following Eq.~\eqref{eq:residual} (of course, in this experience there is no $\inp$). We also downscale the $-\neuron_i$ term by $0.5$. Moreover, we choose the network $\mu$ as a fully-connected feed-forward net, with 1 hidden layer made up of 20 neurons with ReLU activation functions. The output layer of $\mu$ has linear activation. 
With the choice of $\tau = 0.5$ s, $n_T=10^{4}$ time steps, $q=10^{4}$, $r_{1}=10^{3}$, $r_{2}=10^{5}$, we get the results plotted in Fig.~\ref{fig:sine-tracking}. The target signal is the black dashed line, the response of $\gamma$ is the continuous green line. 
The number of iterations for updating the derivatives of the weights of $\mu$ is set to  $n_{\mathrm{iter}}=100$, with a learning rate $\lambda=10^{-5}$ and a decay factor $\eps=10^{3}$. It is possible to see how the response of $\gamma$ is able to track the target signal and the accuracy of the tracking quickly improves in the early time steps. 
The amplitude reached by the response of $\statenet$ is affected by a slight reduction with respect to $\target$, due to the regularization terms in the Lagrangian function. This experiment confirms that the tracking information, provided through $\ell$, is able to induce appropriate changes in the parameters of $\statenet$, that allow such network to follow the signal. Notice that this signal propagates through the state-to-costate map $\costatenet$ and it is not directly attached to the neurons of $\statenet$, as in common machine learning problems.

\begin{figure}[t]
\centerline{\hfil\includegraphics{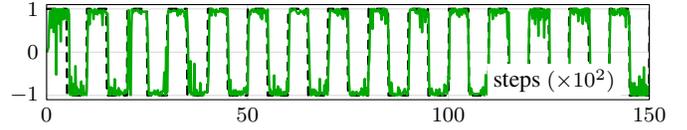}\hfil}
\caption{Sign prediction of a sinusoidal signal using a recurrent network
$\gamma$ of 2 neurons. Black dashed line: target signal, continuous green
line: response of $\gamma$. The input of the network is a sinusoidal wave
with frequency 0.002 Hz. The target to track is the sign of the input
signal.\protect\label{fig:sign-of-sine}}
\end{figure}

\paragraph{Case (b): predicting the sign of an input signal}
Let us now assume that both networks $\gamma$ and $\mu$ receive as input a sinusoidal signal $\inp(\timevar) = \sin(2\pi\varphi\timevar)$ with frequency $\varphi=0.002$ Hz. The task of predicting the sign of $\inp(\timevar)$ can be translated in a tracking control problem, where the target signal $\target(\timevar)$ is defined as
\[\target(\timevar) = 
\begin{cases}
			1, & \text{if $\inp(\timevar)\ge 0$}\\
            -1, & \text{otherwise}.
\end{cases}\]
Here, we choose the model of the network $\gamma$ as in Case (a), while the network $\mu$ has $\tanh$ activation functions in the hidden layer. With the choice of $\tau = 0.5$ s, $n_T=1.5 \times 10^{4}$ time steps, $q=10^{5}$, $r_{1}=10^{3}$, $r_{2}=10^{2}$, we get the results plotted in Fig.~\ref{fig:sign-of-sine}. The target signal is the black dashed line, the response of $\gamma$ is the continuous green line. 
The maximum number of iterations for updating the derivatives of the weights of $\mu$ is again set to $n_{\mathrm{iter}}=100$, with an adaptive learning rate $\lambda$ which starts from $10^{-3}$ and a decay factor $\eps=10^{4}$. Here, the adaptive strategy for $\lambda$ is the one used by the Adam optimizer. This task is clearly more challenging than the previous one, since we ask $\gamma$ to react in function of the input $\inp$, still using the state-to-costate map $\costatenet$ as a bridge to carry the information.
Interestingly, also in this case, the response of $\statenet$ is able to track the target signal, correctly interleaving the information from the previous state and the current input.

\begin{figure}[t]
\centerline{\hfil\includegraphics{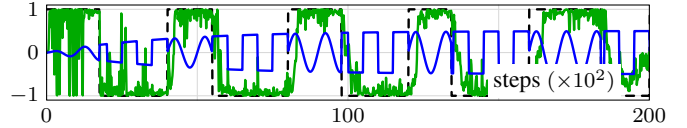}\hfil}
\caption{Classification of wave-shapes using a recurrent network $\gamma$ of
2 neurons. Black dashed line: target signal, continuous green line: response
of $\gamma$, continuous blue line: input signal. The input of the network is
a sequence of sines and square waves, multiplied by a smoothing factor $1 -
\exp(-\timevar/\psi)$, where $\psi = 2000\,s^{-1}$.
\protect\label{fig:wave-class}}
\end{figure}

\paragraph{Case (c): classifying the different wave-shapes of an input signal}
Finally, we consider the case in which both networks $\statenet$ and $\mu$ get as input a piece-wise defined signal characterized by two different wave-shapes. More precisely, we assume that $\inp(\timevar) = \inp_{1}(\timevar) = (1/2) \sin(2\pi\varphi\timevar)$ or $\inp(\timevar) = \inp_{2}(\timevar) = -(1/2) (-1)^{\lfloor 2\varphi\timevar \rfloor}$, with frequency $\varphi=0.002$ Hz, in different time intervals randomly sampled on the whole time horizon. Moreover, we multiply $\inp(\timevar)$ by a smoothing factor $1 -
\exp(-\timevar/\psi)$, where $\psi = 2000\,s^{-1}$, in order to help the network $\mu$ learning to estimate the costate. The task of classifying the wave-shape of $\inp(\timevar)$ can be again translated in a tracking control problem, where the target signal $\target(\timevar)$ is defined as
\[\target(\timevar) = 
\begin{cases}
			1, & \text{if $\inp(\timevar)=\inp_{1}(\timevar)$}\\
            -1, & \text{if $\inp(\timevar)=\inp_{2}(\timevar)$}.
\end{cases}\]
In this case, the networks have to deal with the need of differently reacting in different time spans.
We choose the models of the networks $\gamma$ and $\mu$ as in Case 
(b). The maximum number of iterations for updating the derivatives of the weights of $\mu$ is again set to $n_{\mathrm{iter}}=100$, with an adaptive learning rate $\lambda$ which starts from $10^{-3}$ and a decay factor $\eps=10^{4}$. The adaptive strategy for $\lambda$ is the same as in Case(b). With the choice of $\tau = 0.5$ s, $n_T=2 \times 10^{4}$ time steps, $q=10^{5}$, $r_{1}=10^{3}$, $r_{2}=10^{2}$, we get the results plotted in Fig.~\ref{fig:wave-class}. 
The target signal is the black dashed line, the response of $\gamma$ is the continuous green line, the input signal is the continuous blue line. 
 Also in this case, the response of $\gamma$ is able to track the target signal, even if we experienced a small delay in the tracking process that we believe to be motivated by the need of smoothly updating the state, in order to favour the transition in switching from predicting $-1$ to $1$ and vice-versa.
In Fig.~\ref{fig:int-lag} we report the average value of the Lagrangian for all the tasks that we exposed, obtained dividing the integral of $\ell$ in $[0, s]$ by $s$, for each $s \in (0, T]$. It is possible to notice that the mean value of the Lagrangian function decreases as time goes by, reflecting the improvement of the model in tracking the different target signals.

\begin{figure}
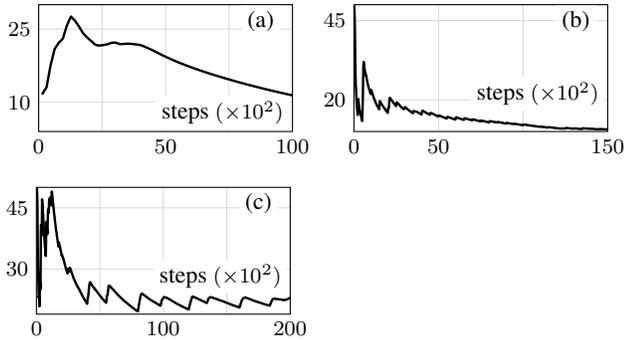

    \centering
\centerline{\includegraphics{./figs/figs-5.mps}\hfil
\includegraphics{./figs/figs-6.mps}}
\medskip
\hbox to\hsize{\includegraphics{./figs/figs-7.mps}\hfil}
    \caption{Average value of the Lagrangian function for Case (a), (b) and (c).}
    \label{fig:int-lag}
\end{figure}


\section{Conclusions and future work}
\label{sec:lim}
This paper introduced a novel theory of optimization that points out a new perspective in the field of optimal control. The forward-in-time Hamiltonian optimization opens up new possibilities for real-time adaptation, tracking and control in lifelong learning scenarios.
By bridging the gap between optimal control and deep learning, this innovative methodology paves the way for significant advancements in the learning and adaptation capabilities of autonomous systems in dynamic environments. The paper delved into the theoretical foundations of forward-in-time Hamiltonian optimization, with a particular emphasis on the concept of time-reversed generalized Riccati equation. 
Future research will focus on enhancing the learning capabilities of the model to facilitate its application in lifelong learning tasks.

In our experiments we have shown that our proposal can be used to efficiently solve different kinds of tracking control problems, where the target signal is always present for each time step. It is important to emphasize that the model is contingent upon a considerable number of parameters and exhibits a high degree of sensitivity to their variations. Consequently, tuning these parameters can be challenging. We recall that gaining explicit generalization capabilities (i.e., when the target signal is not given) is not a goal pursued within the scope of this study, but it will be the main point of our future work. Indeed, we mention that this novel approach still has limitations in such a direction. In most of the experiments, the response of $\statenet$ is not able to generate the target signal if we mask it after a certain number of time steps, freezing the weights of $\mu$ up to the time horizon. An example of this behavior can be seen in Fig.~\ref{fig:wave-class-gen}.
However, in Case (b) we have registered that $\gamma$ is able to reproduce the target $\target$ even if it is masked after 15000 steps, as shown in Fig.~\ref{fig:sign-of-sine-gen}.
This result suggests further investigations on this direction and we will orient our research interest on the capability of learning of our proposal, in order to apply it to lifelong learning tasks \cite{de2021continual}.

\begin{figure}
\centerline{\hfil\includegraphics{./figs/figs-3.mps}\hfil}
\caption{Example of lack of generalization of our approach in the wave-shape
classification task.  Black dashed line: target signal, continuous green
line: response of $\gamma$, continuous blue line: input signal. The target
signal is provided up to 20000 steps and it is masked up to the time
horizon.\protect\label{fig:wave-class-gen}}
\centerline{\hfil\includegraphics{./figs/figs-4.mps}\hfil}
\caption{Example of generalization in the sign prediction task. Black dashed
line: target signal, continuous green line: response of $\gamma$. The target
is given up to 15000 steps.\protect\label{fig:sign-of-sine-gen}} 
\end{figure}

\section{Acknowledgments}
This work has been supported by the French government, through the 3IA C\^ote d’Azur, Investment in the Future, project managed by the National Research Agency (ANR) with the reference number ANR-19-P3IA-0002.

\bibliography{aaai24}

\begin{thebibliography}{18}
\providecommand{\natexlab}[1]{#1}

\bibitem[{Ambrosio, Fusco, and Pallara(2000)}]{ambrosio2000functions}
Ambrosio, L.; Fusco, N.; and Pallara, D. 2000.
\newblock \emph{Functions of Bounded Variation and Free continuity Problems}.
\newblock Oxford Mathematical Monographs, The Clarendon Press Oxford University
  Press.

\bibitem[{Bardi, Dolcetta et~al.(1997)}]{bardi1997optimal}
Bardi, M.; Dolcetta, I.~C.; et~al. 1997.
\newblock \emph{Optimal control and viscosity solutions of
  Hamilton-Jacobi-Bellman equations}, volume~12.
\newblock Springer.

\bibitem[{Bertsekas(2019)}]{bertsekas2019reinforcement}
Bertsekas, D. 2019.
\newblock \emph{Reinforcement learning and optimal control}.
\newblock Athena Scientific.

\bibitem[{Betti et~al.(2022)Betti, Faggi, Gori, Tiezzi, Marullo, Meloni, and
  Melacci}]{betti2022continual}
Betti, A.; Faggi, L.; Gori, M.; Tiezzi, M.; Marullo, S.; Meloni, E.; and
  Melacci, S. 2022.
\newblock Continual Learning through Hamilton Equations.
\newblock In \emph{Conference on Lifelong Learning Agents}, 201--212. PMLR.

\bibitem[{Courant and Hilbert(2008)}]{courant2008methods}
Courant, R.; and Hilbert, D. 2008.
\newblock \emph{Methods of mathematical physics: partial differential
  equations}.
\newblock John Wiley \& Sons.

\bibitem[{De~Lange et~al.(2021)De~Lange, Aljundi, Masana, Parisot, Jia,
  Leonardis, Slabaugh, and Tuytelaars}]{de2021continual}
De~Lange, M.; Aljundi, R.; Masana, M.; Parisot, S.; Jia, X.; Leonardis, A.;
  Slabaugh, G.; and Tuytelaars, T. 2021.
\newblock A continual learning survey: Defying forgetting in classification
  tasks.
\newblock \emph{IEEE transactions on pattern analysis and machine
  intelligence}, 44(7): 3366--3385.

\bibitem[{Diehl, Bock, and Schl\"{o}der(2005)}]{doi:10.1137/S0363012902400713}
Diehl, M.; Bock, H.~G.; and Schl\"{o}der, J.~P. 2005.
\newblock A Real-Time Iteration Scheme for Nonlinear Optimization in Optimal
  Feedback Control.
\newblock \emph{SIAM Journal on Control and Optimization}, 43(5): 1714--1736.

\bibitem[{Evans(2022)}]{evans2022partial}
Evans, L.~C. 2022.
\newblock \emph{Partial differential equations}, volume~19.
\newblock American Mathematical Society.

\bibitem[{Gamkrelidze, Pontrjagin, and
  Boltjanskij(1964)}]{gamkrelidze1964mathematical}
Gamkrelidze, R.; Pontrjagin, L.~S.; and Boltjanskij, V.~G. 1964.
\newblock \emph{The mathematical theory of optimal processes}.
\newblock Macmillan Company.

\bibitem[{Garcia, Prett, and Morari(1989)}]{garcia1989model}
Garcia, C.~E.; Prett, D.~M.; and Morari, M. 1989.
\newblock Model predictive control: Theory and practice—A survey.
\newblock \emph{Automatica}, 25(3): 335--348.

\bibitem[{Giaquinta and Hildebrandt(2013)}]{giaquinta2013calculus}
Giaquinta, M.; and Hildebrandt, S. 2013.
\newblock \emph{Calculus of variations II}, volume 311.
\newblock Springer Science \& Business Media.

\bibitem[{Hinton(2022)}]{hinton2022forward}
Hinton, G. 2022.
\newblock The forward-forward algorithm: Some preliminary investigations.
\newblock \emph{arXiv preprint arXiv:2212.13345}.

\bibitem[{Jin et~al.(2019)Jin, Wang, Yang, and Mou}]{Jin2019PontryaginDP}
Jin, W.; Wang, Z.; Yang, Z.; and Mou, S. 2019.
\newblock Pontryagin Differentiable Programming: An End-to-End Learning and
  Control Framework.
\newblock \emph{ArXiv}, abs/1912.12970.

\bibitem[{Lewis, Vrabie, and Syrmos(2012)}]{lewis2012optimal}
Lewis, F.~L.; Vrabie, D.; and Syrmos, V.~L. 2012.
\newblock \emph{Optimal control}.
\newblock John Wiley \& Sons.

\bibitem[{Mai et~al.(2022)Mai, Li, Jeong, Quispe, Kim, and Sanner}]{MAI202228}
Mai, Z.; Li, R.; Jeong, J.; Quispe, D.; Kim, H.; and Sanner, S. 2022.
\newblock Online continual learning in image classification: An empirical
  survey.
\newblock \emph{Neurocomputing}, 469: 28--51.

\bibitem[{Osborne(1969)}]{osborne1969shooting}
Osborne, M.~R. 1969.
\newblock On shooting methods for boundary value problems.
\newblock \emph{Journal of mathematical analysis and applications}, 27(2):
  417--433.

\bibitem[{Sutton and Barto(2018)}]{sutton2018reinforcement}
Sutton, R.~S.; and Barto, A.~G. 2018.
\newblock \emph{Reinforcement learning: An introduction}.
\newblock MIT press.

\bibitem[{Zhang, Wang, and Liu(2014)}]{6814892}
Zhang, H.; Wang, Z.; and Liu, D. 2014.
\newblock A Comprehensive Review of Stability Analysis of Continuous-Time
  Recurrent Neural Networks.
\newblock \emph{IEEE Transactions on Neural Networks and Learning Systems},
  25(7): 1229--1262.

\end{thebibliography}

\appendix

\section{Proof of Theorem~\ref{th:HJB}}\label{appendix:theo}
Let $\timevar\in[\time_0,T)$ and $\statevar\in\R^\statedim$.
Furthermore, instead of the optimal
control let us use a constant control $\alpha_1(\time)=\controlvar \in \R^\controldim$ for times $\time\in[\timevar,\timevar+\epsilon]$
and then the optimal control for the remaining temporal interval.
More precisely, following the notation introduced in Definition~\ref{def:solution},
let us pose
\[\alpha_2\in\argmin_{\alpha\in\controlspace} C_{\state(\timevar+\eps; \controlvar,\statevar,
\timevar),\timevar+\eps}(\alpha).\]
Now consider the following control
\begin{equation}
\alpha_3(\time)=
\begin{cases}
\alpha_1(\time) &  \text{if} \ \time\in[\timevar,\timevar+\eps)\\
\alpha_2(\time) &  \text{if} \ \time\in[\timevar+\eps,T] \ .
\end{cases}
\end{equation}
Then the cost associated to this control is
\begin{equation}
\begin{split}
C_{\statevar,\timevar}(\alpha_3)&=\int_{\timevar}^{\timevar+\eps} \ell(a, \state(\time;a,
\statevar,\timevar),\time)\,d\time \\
&+\int_{\timevar+\eps}^T \ell(\alpha_2(\time), \state(\time;\alpha_2,\statevar,
\timevar),\time)\,ds \\
&= \int_{\timevar}^{\timevar+\eps} \ell(a, \state(\time;a,
\statevar,\timevar),\time)\,d\time \\
&+ v(\state(\timevar+\eps; a,\statevar,\timevar),\timevar+\eps)
\end{split}
\end{equation}
By definition of value function we also have that $v(\statevar,\timevar)\le
C_{\statevar,\timevar}(\alpha_3)$. When  rearranging this inequality, dividing by
$\eps$, and making use of the above relation we have
\begin{equation}
\begin{split}
& \frac{v(\state(\timevar+\eps; a,\statevar,\timevar),\timevar+\eps)-v(\statevar,\timevar)}{\eps}+ \\
& \frac{1}{\eps}\int_\timevar^{\timevar+\eps} \ell(a, \state(\time;\controlvar,\statevar,\timevar),\time)\,d\time\ge0
\end{split}
\end{equation}
Now taking the limit as $\eps\to 0$ and making use of the fact that
$\state'(\timevar,\controlvar,\statevar,\timevar)=f(\statevar,\controlvar,\timevar)$
we get
\begin{equation}
v_\timevar(\statevar,\timevar)+ Dv(\statevar,\timevar)\cdot 
f(\statevar,\controlvar,\timevar)+\ell(\controlvar,\statevar,\timevar)\ge0.
\end{equation}
Since this inequality holds for any chosen $\controlvar\in\R^\controldim$ we can say that
\begin{equation}
\inf_{\controlvar\in \R^{\controldim}} \{v_\timevar(\statevar,\timevar)+ Dv(\statevar,\timevar)\cdot 
f(\statevar,\controlvar,\timevar)+\ell(\controlvar,\statevar,\timevar)\}\ge 0
\end{equation}
Now we show that the $\inf$ is actually a $\min$ and, moreover, that minimum is
$0$. To do this we simply choose
$\alpha^*\in\argmin_{\alpha\in\controlspace} C_{\statevar,\timevar}(\alpha)$ and denote
$a^*:=\alpha^*(\timevar)$, then 
\begin{equation}
\begin{split}
v(\statevar,\timevar)&= \int_\timevar^{\timevar+\eps} \ell(\alpha^*(\time), \state(\time;
\control^*,\statevar,\timevar),\time)\,d\time \\
&+v(\state(\timevar+\eps;\control^*,\statevar,\timevar).
\end{split}
\end{equation}
Then again dividing by $\eps$ and using that
$\state'(\timevar;\control^*,\statevar,\controlvar)=f(\statevar,\controlvar^*,\timevar)$
we finally get
\begin{equation}
v_\timevar(\statevar,\timevar)+ Dv(\statevar,\timevar)\cdot f(\statevar,
\controlvar^*,\timevar)+\ell(\controlvar^*,\statevar,\timevar)=0
\end{equation}
But since $a^*\in \R^{\controldim}$ and we knew that
$\inf_{\controlvar\in \R^{\controldim}} \{v_\timevar(\statevar,\timevar)+ Dv(\statevar,\timevar)\cdot 
f(\statevar,\controlvar,\timevar)+\ell(\controlvar,\statevar,\timevar)\}\ge 0$
it means that
\begin{align}
\begin{split}
&\inf_{a\in\R^\controldim} \{v_\timevar(\statevar,\timevar)
+ Dv(\statevar,\timevar)\cdot f(\statevar,\controlvar,\timevar)
+\ell(\controlvar,\statevar,\timevar)\}
= \\
&\min_{a\in\R^\controldim} \{v_\timevar(\controlvar,\timevar)+ Dv(\statevar,\timevar)\cdot f(\statevar,\controlvar,\timevar)+\ell(\controlvar,\statevar,\timevar\}=0.
\end{split}
\end{align}
Recalling the definition of $H$ we immediately see that
the last inequality is exactly (HJB). 

\section{Main Algorithm}\label{appendix:alg}
We report in Alg.~\ref{alg:main}  the main algorithm described in Section~\ref{sec:costate-est}.
We omit to explicitly recall the dependence of $\state, \costate, \neuron, \control, \inp, \theta, w, b, k$ (and their derivatives) on $t$ treating those as local variable whose value is updated at each new temporal
instant.
The algorithm is structured as described below.
Firstly, we provide the initialization of the variables $\neuron(0)$ (the initial values of the outputs of  neurons in network $\gamma$), $w(0)$, $b(0)$ and $k(0)$ (the initial weights of network $\gamma$), $\theta(0)$ (weights of networks $\mu$), $\theta'(0)$ (the derivative of $\theta$). Then, 
the form of the Lagrangian $\ell$ must be provided, 
together with those hyper-parameters that define the optimization process, namely the learning rate $\lambda$, the number of iterations $n_{\mathrm{iter}}$ for the gradient descent loop to update 
$\theta'$, the magnitude $\tau$ of the Euler step. The input $\inp$, if present, 
is provided at each time step,
while we indicate with $n_T$ the total number of time steps. 
\begin{algorithm}[t!]
\caption{Main Algorithm.} \label{alg:main}
\begin{algorithmic}  
    \Require $\neuron(0)=(0,0,\dots,0)$ (initial state of the neurons see Section~\eqref{sec:ocp-net}), 
    $w(0)$, $b(0)$, $k(0)$ (initial weights, randomly initialized); $\theta(0)$ (initial weights of $\mu$, randomly initialized); $\theta'(0)$ (initialized at zero); $\lambda$ (learning rate 
    for the optimization of Eq.~\eqref{eq:loss-thetadot}); $n_{\mathrm{iter}}$ 
    (number of iterations for the optimization of Eq.~\eqref{eq:loss-thetadot}); $\tau$ (magnitude of Euler step); $\ell$ (Lagrangian function); $\inp$ (input signal, if present); $n_T$ (total number of time steps).
    \vskip 5mm
    \For{$\time = 0, \dots, n_T -1$}
        \State $\costate \gets \mu (\state, \inp, \theta)$ \Comment{Compute $\costate$ with network $\mu$}
        \State $\mathcal{H}(\state, \costate, \controlvar, \time) \gets \ell(\controlvar, \state, \time) + \costate \cdot \state'$ \Comment{Compute $\mathcal{H}$}
        \State $\control^* \gets  \argmin_{\controlvar \in \controlspace} \mathcal{H}(\state, \costate, \controlvar, \time)$ \Comment{Compute $\control^*$, ~\eqref{eq:opt-ctrl}}
        \State $\neuron' \gets \gamma (\neuron, \inp, w, b, k)$ \Comment{Compute $\neuron'$, ~\eqref{eq:full-state-model}}
        \State $(w', b', k') \gets \control^*$ \Comment{Assign $(w', b', k')$, ~\eqref{eq:full-state-model}}
        \State $\state' \gets (\neuron', w', b', k')$ \Comment{Construct $\state'$}
        \State$H(\state, \costate, \time) \gets \ell(\control^*, \state, \time) + \costate \cdot \state'$ \Comment{Compute $H$, ~\eqref{eq:hamiltonian}}
        \State $\costate' \gets -H_\statevar(\state, \costate, \time)$ \Comment{H-equation for $\costate$, Eq.~\eqref{eq:H-J-equation}}
        \State $\mathrm{iter} \gets 0$
        \While{$\mathrm{iter} < n_{\mathrm{iter}}$} \Comment{Gradient descent loop}
            \State $\Omega_\time(\theta') \gets \hbox{Eq.~\eqref{eq:loss-thetadot}} $ \Comment{Loss function for $\theta'$}
            \State $\theta' \gets \theta' - \lambda \nabla_{\theta'} \Omega_\time(\theta')$ \Comment{Update $\theta'$}
        \State $\mathrm{iter} \gets \mathrm{iter} + 1$
        \EndWhile
    \State $\theta \gets \theta - \tau \theta'$ \Comment{\textsl{time-reversed generalized Riccati equation}, Eq.~\eqref{eq:time-reversal-theta}}
    \State $\state = \state + \tau \state'$ \Comment{Update $\state$ via Euler Step}
    \State $\time \gets \time+1$
    \EndFor
\end{algorithmic}
\end{algorithm}

Then, for every time step $\time=0, \dots, n_T$, we proceed as follows. Firstly, we compute an approximation of the costate $\costate(\time)$ using the network $\costatenet$, which takes as input the state $\state(\time)$ and the input $\inp(\time)$, if present.

When the components of $f$ are defined as in Eq.~\eqref{eq:full-state-model} and $\ell$ has the form in Eq.~\eqref{eq:lagrangian-fcn-cp}
the computation of the optimal control, given by Eq.~\eqref{eq:syntesis-costate}, can be done 
explicitly. Let us name $\mathcal{H}$ the function of which we want to take the $\argmin$ in
Eq.~\eqref{eq:syntesis-costate}, i.e. let us define
for every $\statevar \in \R^{\statedim}, \costatevar \in \R^{\statedim}, \controlvar \in \R^{\controldim}, \timevar \in [t_0, T]$:
\begin{equation}\label{eq:pseudo-hamiltonian}
 \mathcal{H}(\statevar,\costatevar, \controlvar, \timevar):=\costatevar\cdot f(\statevar ,\controlvar,\timevar)+\ell(\controlvar,\statevar,\timevar)   
\end{equation}
Furthermore, let us denote with $\costate^{w}_{ij}$,  $\costate^{b}_{i}$, $\costate^{k}_{ij}$ the costates associated to $w_{ij}$, $b_{i}$, $k_{ij}$, respectively.\footnote{Here with "associated to" we mean 
that $\costate^{w}_{ij}$,  $\costate^{b}_{i}$, $\costate^{k}_{ij}$ are the components of the costate obtained by differentiating the Hamiltonian with respect to
the corresponding component of the state.}
Then the controls can be obtained from the stationarity conditions of $\mathcal{H}$, i.e.
$\mathcal H_\controlvar=0$ that can be  factorized as follows:
\[\begin{cases}
r_2 \omega_{ij} + p^{w}_{ij}=0,& \text{for }(j,i)\in\arcs; \\
r_2 \nu_{i} + p^{b}_{i},&  \text{for } i\in\vertices=0; \\
r_2 \chi_{ij} + p^{k}_{ij}=0,&  \text{for } i\in\vertices \hbox{ and }
j=1,\dots,\inpdim,
\end{cases}\]
from which we immediately get the explicit formulas for the components $\omega^*_{ij}$, $\nu^*_{i}$ and $\chi^*_{ij}$ of the optimal control $\control^*$
\begin{equation}\label{eq:opt-ctrl}
\begin{cases}
\omega^*_{ij} = -p^{w}_{ij}/r_2 &(j,i)\in\arcs\\
\nu^*_{i} = -p^{b}_{i}/r_2 &  i\in\vertices\\
\chi^*_{ij} = -p^{k}_{ij}/r_2 & i\in\vertices \hbox{ and }
j=1,\dots,\inpdim.
\end{cases}
\end{equation}

Using the net $\statenet$, we then compute $\neuron'(\time)$, while using the optimal control 
$\alpha^*(\time)$
and Eq.~\eqref{eq:full-state-model} we compute $(w'(\time), b'(\time), k'(\time))=\alpha^*(\time)$. 
We have therefore obtained the whole $\state'(\time) = (\neuron'(\time), w'(\time), b'(\time), k'(\time))$. 

Recalling Eq.~\eqref{eq:hamiltonian} and using the optimal controls, we compute the Hamiltonian as $H(\state(\time), \costate(\time), \time) = \ell(\control^*(\time), \state(\time), \time) + \costate(\time) \cdot \state'(\time)$, from which we obtain the derivative $\costate'(\time)$ of the costate by Eq.~\eqref{eq:H-J-equation}. 

Finally, we iteratively update the derivatives $\theta'(\time)$ of the weights of $\costatenet$ by gradient descent, with learning rate $\lambda$, optimizing the loss function $\Omega_\time(\theta')$ defined in Eq.~\eqref{eq:loss-thetadot}. The procedure terminates after $n_{\mathrm{iter}}$ iterations, providing a new estimation of $\theta'$. Integrating the time-reversed generalized Riccati Eq.~\eqref{eq:time-reversal-theta}, we update the values $\theta(\time)$ of the weights of $\costatenet$ and, with an Euler Step of magnitude $\tau$, we update the state $\state$.

\end{document}
